\documentclass[12pt,a4paper,reqno]{amsart}
\usepackage{graphicx} 

\usepackage{amssymb}
\usepackage{amsmath}
\usepackage{amsthm}

\usepackage[all,knot]{xy}
\xyoption{arc} 
\usepackage{hyperref}
\usepackage{xcolor}
\newtheorem{definition}{Definition}
\newtheorem{proposicao}{Proposition}
\newtheorem{observacao}{Remark}
\newtheorem{exemplo}{Example}
\newtheorem{teorema}{Theorem}
\newtheorem{corolario}{Corollary}

\newcommand{\h}{\mathcal{H}}
\newcommand{\A}{\mathbf{A}}
\newcommand{\I}{\mathbf{I}}
\newcommand{\J}{\mathbf{J}}

\title{On the spectra of $k$-uniform threshold hypergraphs}

\author[M. Abd\'on]{Miriam Abd\'on}
\email{miriam\_abdon@id.uff.br}
\author[L. Portugal]{ Lucas Portugal}
\email{lucasportugal@id.uff.br}
\author[R. Del-Vecchio]{Renata Del-Vecchio}
\email{rrdelvecchio@id.uff.br}
\author[R. Freitas]{Renata de Freitas}
\email{renatafreitas@id.uff.br}

\address{Instituto de Matemática e Estatística, Universidade Federal Fluminense, Niter\' oi, RJ, Brazil}
\begin{document}

\begin{abstract}
In this article we introduce a definition of \emph{$k$-uniform thresholds hypergraphs} through a binary sequence, a natural extension of the classical definition for thresholds graphs. 
We characterize some of its eigenvalues and multiplicities by means of combinatorial numbers, derived from edge counts.
An important problem addressed in Spectral Graph Theory
is to find graphs with few distinct eigenvalues.
Our characterization allows us to construct $k$-uniform threshold
hypergraphs having an arbitrary number of vertices with few distinct
eigenvalues.

\end{abstract}
\maketitle
\section{Introduction}

\label{sec:int}

Although the study of hypergraphs is well-established from a structural perspective, a spectral theory for hypergraphs is relatively new and follows two possible approaches: through matrices or tensors. The tensor method was proposed by Cooper and Dutle \cite{Cooper} in 2012, and initially seemed more promising. However, due to the challenges in computing their eigenvalues, the interest in the matrix-based approach has grown recently, as evidenced by \cite{Banerjee-matriz,energy,Reff2019,kr-regular,matrix-pec-radius-2017,estrada-index,distance}.
The main criticism of using matrices is that a single matrix can represent different hypergraphs, which does not happen with tensors. However, when restricted to certain classes of hypergraphs, each adjacency matrix corresponds to a unique hypergraph, allowing the matrix spectrum to reflect structural properties, as in spectral graph theory.

In this work, we introduce a new class of hypergraphs, the binary  $k$-uniform threshold hypergraphs, which generalize threshold graphs. Among some important structural properties of these hypergraphs, we prove that, in this class, each matrix corresponds to a unique hypergraph (Proposition~\ref{matrix_sequence}). Based on this fact, we investigate the spectrum of these hypergraphs, characterizing some of their eigenvalues and respective multiplicities.

Obtaining the spectrum of binary $k$-uniform threshold hypergraphs allows us to address an important problem from spectral graph theory in this context, namely, finding infinite families of graphs with few distinct eigenvalues.

This paper is structured into four sections. Besides this introduction, in Section 2, we present basic concepts and definitions of hypergraphs. We define the class of $k$-uniform binary threshold hypergraphs and present some structural properties of these hypergraphs. Section 3 contains the main results, where the spectrum is obtained. As an application, we present infinite families of hypergraphs in this class with few distinct eigenvalues. Finally, in the last section, we present the conclusions.

\section{Hypergraphs and Thresholds hypergraphs}
\subsection{Hypergraphs}

In \cite{Bretto}, the basic definitions and concepts of hypergraph theory can be found. The necessary  definitions for understanding the remainder of the article will be presented below, to ensure the text is self-contained.

\begin{definition}
A \emph{hypergraph} $\h=(V,E)$ is given by a finite vertex set $V$ and a set $E=\{e:e\subseteq V\}$, whose elements are called (hyper) edges and have cardinality greater than or equal $2$.  

A hypergraph $\h$ is said to be a \emph{$k$-uniform} hypergraph (or a $k$-graph), for $k\geq 2$, if all edges have the same cardinality $k$.   
\end{definition}

A graph is a special case of a $k$-uniform hypergraph, considering $k=2$.
\begin{definition}
Let $\h$ be a hypergraph with $n$ vertices. The \emph{adjacency matrix} of $\h$, denoted by $\A(\h)$, is the $n \times n$ symmetric matrix with entries $$a_{i,j}=|\left\{ e \in E(\h):v_i,v_j\in e \right\}|.$$ We also denote the eigenvalues of $\A(\h)$ as $\lambda_i(\h), 1\leq i\leq n.$ 

The \emph{spectrum} of $\A(\h)$ is defined by $$spec (\h)= \{ (\lambda_1(\h))^{m_1}, (\lambda_2(\h))^{m_2}, \ldots,  (\lambda_t(\h))^{m_t}\},$$ 
where $\lambda_1(\h) \geq \cdots \geq \lambda_t(\h)$ are the distinct eigenvalues of $\A(\h)$, and $m_1, \ldots, m_t$ denote their respective multiplicities.

\end{definition}
\medskip
\subsection{Thresholds hypergraphs}
A threshold graph may be defined in many ways, for example, as a graph free from $\{2K_2, C_4, P_4\}$, or as a split cograph. For us, the definition through a binary sequence will be more interesting: using this definition, we are able to generalize some results from threshold graphs to $k$-uniform threshold hypergraphs.

\
The definition of threshold graphs through a binary sequence
is extended for the case of $k$-uniform hypergraphs as follows.
We introduce an isolated vertex for each $0$ in the sequence, 
and a vertex connected to all previous vertices in all possible ways for each $1$ in the sequence.

\begin{definition}\label{binary}
Let $k$ be a positive integer and $(b_1,b_2,\ldots,b_n)_k$ be a binary sequence (for each $i$, with $1\leq i\leq n$, we have $b_i\in\{0,1\}$) such that $b_1=b_2=\cdots=b_{k-1}=0$. The $k$-uniform threshold hypergraph $\h=(V,E)$ with (ordered) vertex set $V=\{v_1,v_2,\ldots,v_n\}$ given by $(b_1,b_2,\ldots,b_n)_k$ is defined as follows:
$$e\in E\Longleftrightarrow e\mbox{ is a $k$-subset of }V\mbox{ and if }j=\mathsf{max}\{i:v_i\in e\}\mbox{ then }b_j=1.$$
\end{definition}

Given a $k$-uniform threshold hypergraph $\h=(V,E)$ and its binary sequence $(b_1,\ldots,b_n)_k$,
we denote by $v_i\in V$ the vertex added at the step $i$ in the recursive construction of $\h$, and for $v_i,v_j\in V$ we define $v_i<v_j$ when $i<j$, that is, when vertex $v_i$ was added before vertex $v_j$. In general, for $v, w \in V$, we write $v < w$ whenever vertex $v$ precedes vertex $w$ in the recursive construction of $\h$ from the associated binary sequence.

If $e=\{x_1,x_2,\ldots,x_k\}$ is an edge of $\h$ we will assume that $x_1<x_2<\cdots<x_k$ and we say that the edge $e=\{x_1,x_2,\ldots,x_k\}$ \emph{ends} at $x_k$.
If $b_i=1$, we say that $v_i$ is a \emph{pseudodominant} vertex.
Pseudodominant vertices are connected to every previous $v_1,v_2,\ldots,v_{i-1}$ vertices in every possible way, that is,
if $b_i=1$, then $e\cup\{v_i\}$ is an edge, for every possible $(k-1)$-subset $e$ of $\{v_1,v_2,\ldots,v_{i-1}\}$. 
If $b_i=0$ and $\{x_1,\ldots,x_k\}$ is an edge satisfying $x_1<x_2<\cdots<x_k$, then $x_k\neq v_i$.
Note that if some edge ends at a vertice $v_i$, then $b_i=1$ 
(otherwise, $b_i=0$ and $v_i$ is added as an isolated vertex at step $i$, and so it is not connected to any previous added vertex).

In the remaining of the text we will call a \emph{binary threshold hypergraph} only by threshold hypergraph.

\begin{observacao} 
With this definition, a $2$-uniform threshold hypergraph is a threshold graph.
\end{observacao}

\begin{exemplo}
Let $\h=(V,E)$ be a $4$-uniform threshold hypergraph on $6$ vertices, say $V=\{1,2,3,4,5,6\}$, 
with binary sequence $(0,0,0,1,1,0)_4$. Then
$$E=\{\{1,2,3,4\},\{1,2,3,5\},\{1,2,4,5\},\{1,3,4,5\},\{2,3,4,5\}\}.$$
\end{exemplo}

In \cite{threshold}, Reiterman, R\"odl, \u{S}i\u{n}ajov\'a and T\r{u}ma propose generalizations for $k$-uniform threshold hypergraphs $\h=(V,E)$ from three equivalent characterizations of threshold graphs, as follows:

\begin{enumerate}
\item There exists a positive integer labeling $c$ of $V$ and a positive integer threshold $t$ such that, for all subsets $X\subseteq V$, we have that $X$ contains a member of $E$ if and only if $$\displaystyle\sum_{x \in X} c(x)>t.$$

\item There exists a positive integer labeling $c'$ of $V$ and a positive integer threshold $t'$ such that, for all $k$-subsets $A\subseteq V$, we have that $A\in E$ if and only if $$\displaystyle\sum_{x \in A} c'(x)>t'.$$
    
\item\label{deft} For $x,y \in V$, define $x\ll y$ if $x$ can be replaced by $y$ in any edge. That is,
for any $\{x_1,x_2,\ldots,x_{k-1}\} \subseteq V-\{x,y\}$, we have that $$\{x,x_1,x_2,\ldots,x_{k-1}\} \in E\mbox{ implies }\{y,x_1,x_2, \ldots,x_{k-1}\} \in E.$$ 
Then, for all $x,y \in V$ either $y\ll x$ or $x\ll y$. 
\end{enumerate}

\begin{observacao}\label{obs_threshold}
The implications $(1)\Rightarrow (2)\Rightarrow (3)$ holds for any $k\geq 2$. The reverse implications holds only for $k=2$ ~\cite{threshold}, that is, the three formulations are equivalent only for graphs.
\end{observacao}

In what follows we show that Definition~\ref{binary} implies characterization (\ref{deft}), and also that the reverse implication is false.     

\begin{proposicao}\label{prop:good_def}
Let $\h=(V,E)$ be a $k$-uniform threshold hypergraph defined by a binary sequence. Then for all $x,y\in V$, either $y\ll x$ or $x\ll y$.
\end{proposicao}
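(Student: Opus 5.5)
The plan is a direct argument from Definition~\ref{binary}: for $v_i, v_j$ with $i<j$ I will show that at least one of $v_i\ll v_j$ or $v_j\ll v_i$ holds, choosing which one according to the bits $b_i,\dots,b_n$. The key observation is that membership of a $k$-subset $A$ in $E$ depends only on $b_{\max A}$, where $\max A$ is the largest index of a vertex in $A$. So I fix $S=\{x_1,\dots,x_{k-1}\}\subseteq V\setminus\{v_i,v_j\}$ and let $m=\max$ index of $S$. I then compare the maxima of $A=S\cup\{v_i\}$ and $B=S\cup\{v_j\}$: $\max A=\max(m,i)$ and $\max B=\max(m,j)$.

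First case: some $\ell\ge j$ has $b_\ell=1$. I claim $v_i\ll v_j$. Suppose $A\in E$ and set $p=\max(m,i)$, so $b_p=1$.
\begin{itemize}
\item If $m>j$, then $\max B=m=p$, hence $B\in E$.
\item If $m<j$ (note $m\ne j$), then $\max B=j$, and we would need $b_j=1$. This can fail, so this subcase needs a different choice of case split.
\end{itemize}

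The case split should therefore be based on $b_j$ itself.
\begin{itemize}
\item \textbf{Case $b_j=1$: show $v_i\ll v_j$.} If $m>j$, then $\max A=\max B=m$, so $A\in E$ implies $B\in E$. If $m<j$, then $\max B=j$ and $b_j=1$, so $B\in E$ regardless of $A$.
\item \textbf{Case $b_j=0$: show $v_j\ll v_i$.} Suppose $B\in E$. If $m<j$, then $\max B=j$ with $b_j=0$, which is impossible; so $m>j>i$. Then $\max A=m=\max B$, hence $A\in E$.
\end{itemize}
These two cases cover every pair $i<j$ and prove Proposition~\ref{prop:good_def}.

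The main obstacle is choosing the right case distinction. A split based on later bits fails, as the first attempt shows, whereas splitting on $b_j$ alone makes every subcase a one-line comparison of maxima. I will also handle the degenerate situations explicitly: $S$ must avoid both $v_i$ and $v_j$, so $m\notin\{i,j\}$, and the trivial case $x=y$ holds by reflexivity of $\ll$.
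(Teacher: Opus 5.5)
Your argument is correct and is essentially the paper's proof: the paper assumes $y\not\ll x$ and shows this forces $y$ to be pseudodominant, which is the contrapositive of your case $b_j=0$. It then proves $x\ll y$ for pseudodominant $y$ as in your case $b_j=1$, only splitting on where the edge ends rather than comparing $\max(m,i)$ with $\max(m,j)$ as you do, and in a final write-up you should drop the abandoned first case split.
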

\begin{proof}
W.l.o.g., let $x,y$ be two vertices such that $x<y$.

Suppose $y\not\ll x$.
Then, there is some $\{y_1,\ldots,y_{k-1}\}\subseteq V-\{x,y\}$ such that $f=\{y,y_1,\ldots,y_{k-1}\}\in E$
but $f'=\{x,y_1,\ldots,y_{k-1}\}\not\in E$.
Note that edge $f$ ends at $y$.
(If not, edge $f$ would end at some $y'\in\{y_1,\ldots,y_{k-1}\}$ and $y'$ would be pseudodominant. 
But then $f'$ would be a $k$-subset of $V$ that ends at the same pseudodominant vertex $y'$ (since $x<y<y'$),
contradicting the fact that $f'$ is not an edge.)
Hence, $y$ is pseudodominant.

Let $\{x_1,\ldots,x_{k-1}\}\subseteq V-\{x,y\}$, with $x_1<\cdots<x_{k-1}$. 
Suppose $e=\{x,x_1,\ldots,x_{k-1}\}\in E$.
We have two cases: (1) edge $e$ ends at $x$, or (2) edge $e$ ends at $x_{k-1}$.
\begin{enumerate}
\item 
Edge $e$ ends at $x$. 
In this case, since $e\in E$, then $x$ is pseudodominant.
Then $e'=\{y,x_1,\ldots,x_{k-1}\}$ ends at $y$ (since $x<y$).
Hence, since $y$ is pseudodominant, $e'\in E$.
\item
Edge $e$ ends at $x_{k-1}$.
In this case, since $e\in E$, then $x_{k-1}$ is pseudodominant.
Now, if $y<x_{k-1}$, then $e'=\{y,x_1,\ldots,x_{k-1}\}$ ends at a pseudodominant vertex and $e'\in E$;
if $y>x_{k-1}$, then $e'=\{y,x_1,\ldots,x_{k-1}\}$ also ends at a pseudodominant vertex (since $y$ is pseudodominant) and $e'\in E$.
\end{enumerate}
In any case, $e'=\{y,x_1,\ldots,x_{k-1}\}\in E$, which proves that $x\ll y$.
\end{proof}

The above proposition tells us that our definition is good in the sense that the set of hypergraphs given by the binary sequence (Definition~\ref{binary}) is a subset of the hypergraphs satisfying the characterization (\ref{deft}) from 1985, which is also one of the oldest and most used characterization of uniform threshold hypergraphs \cite{2020threshold}.

In other words, from Proposition~\ref{prop:good_def}, every $k$-uniform threshold hypergraph given by a binary sequence satisfies characterization (\ref{deft}). But the reverse implication is not true, as shown in the following example. 

\begin{exemplo}
In fact, let $\h=(V,E)$ be the $4$-uniform hypergraph on $7$ vertices with set of vertices $V=\{1,2,\ldots,7\}$ and set of edges
$$E=\{\{1,5,6,7\},\{2,5,6,7\},\{3,5,6,7\},\{4,5,6,7\}\}.$$
Then $\h$ satisfies characterization (\ref{deft}): 
\begin{enumerate}
\item We have that $x\ll y$, for any $x,y \in \{1,2,3,4\}$ by vertex replacement in the edges. 
\item  The vertices $5,6$ and $7$ belongs to every edge of $\h$, then there are no subset with $3$ elements of $V-\{x,y\}$ with $x,y \in \{5,6,7\}$ that are contained in $E$. Therefore we have that $x\ll y$, for any $x,y \in \{5,6,7\}$. 
\item Now it suffices to show that $5\ll 4$ (because cases $x\ll y$ for $x \in \{5,6,7\}$ and $y \in \{1,2,3,4\}$ are analogous). 

The subsets of $V-\{4,5\}$ with $3$ elements that are contained in $E$ are $\{1,6,7\}$, $\{2,6,7\}$, and $\{3,6,7\}$.

By definition, $5\ll 4$ is true if for any of $\{1,4,6,7\}$, $\{2,4,6,7\}$, or $\{3,4,6,7\}$ that is an edge then we must have that $\{1,5,6,7\},\{2,5,6,7\}$ or $\{3,5,6,7\}$ is also an edge.
But since the subsets $\{1,4,6,7\},\{2,4,6,7\}$ or $\{3,4,6,7\}$ are not edges, the condition $5\ll 4$ holds.
\end{enumerate}

Since we have $x\ll y$ or $y\ll x$ 
for any $x,y\in V$, then $\h$ satisfies characterization (\ref{deft}).
However, $\h$ is not represented by any binary sequence $(b_1,\ldots,b_7)_4$, because, if it was, it would have some a dominating vertex $v_7$ corresponding to $b_7=1$ (since it is connected). But then $E$ would have at least $20$ edges --- corresponding to all the $\binom{6}{3}$ edges containing $v_7$.
\end{exemplo}

Next,  we generalize some important structural properties of threshold graphs to the class of $k$-uniform threshold hypergraph.

\begin{proposicao} Let $\h$ be a  $k$-uniform threshold hypergraph, Then
\begin{itemize}
    \item[1.] Any induced subhypergraph of $\h$ is also a $k$-uniform threshold hypergraph.
    \item[2.] The complement of a $\h$ is a $k$-uniform threshold hypergraph.
     \item[3.] A threshold hypergraph is a split-hypergraph.

\end{itemize}

\end{proposicao}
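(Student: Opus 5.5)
The three items follow by reading the edge condition of Definition~\ref{binary} on suitably modified binary sequences. The one fact used throughout is this: a $k$-subset $e$ of $V$ is an edge if and only if the bit of its last vertex (in the order $<$) equals $1$. Moreover, every $k$-subset ends at some $v_j$ with $j\ge k$, so the values $b_1,\dots,b_{k-1}$ never influence which $k$-subsets are edges.

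\emph{Item 1.} Let $S=\{v_{i_1},\dots,v_{i_m}\}\subseteq V$ with $i_1<\dots<i_m$, and consider the induced subhypergraph $\h[S]$. Its edges are the edges of $\h$ contained in $S$. I would give $S$ the inherited order and define the sequence $(c_1,\dots,c_m)_k$ by $c_r=b_{i_r}$ for $r\ge k$ and $c_1=\dots=c_{k-1}=0$. Take a $k$-subset $e\subseteq S$ ending at $v_{i_r}$. Then $r\ge k$, and its last vertex in $S$ is the same as its last vertex in $V$. Hence $e\in E(\h[S])$ iff $b_{i_r}=1$ iff $c_r=1$. So $\h[S]$ is the threshold hypergraph given by $(c_1,\dots,c_m)_k$. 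The only point needing care is that forcing the first $k-1$ bits to be zero, as the definition requires, changes nothing, by the remark above. If $m<k$, the hypergraph has no edges and the all-zero sequence works.

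\emph{Item 2.} I take the complement $\overline{\h}$ to have vertex set $V$ and, as edges, the $k$-subsets of $V$ that are not in $E$. Define $\bar b_i=0$ for $i\le k-1$ and $\bar b_i=1-b_i$ for $i\ge k$. For a $k$-subset $e$ ending at $v_j$ (so $j\ge k$) we get $e\notin E$ iff $b_j=0$ iff $\bar b_j=1$. Hence $\overline{\h}$ is given by $(\bar b_1,\dots,\bar b_n)_k$.

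\emph{Item 3.} Since the paper has not yet defined split-hypergraph, I will use the natural notion: $V$ can be partitioned into a set $K$ in which every $k$-subset is an edge (a clique) and a set $I$ containing no edge (an independent set). Take $K=\{v_i: b_i=1\}$ and $I=\{v_i:b_i=0\}$. Any $k$-subset of $K$ ends at a vertex with bit $1$, so it is an edge. Any $k$-subset of $I$ ends at a vertex with bit $0$, so it is not an edge. This argument requires no computation. I expect the only real obstacle to be fixing the intended definitions of complement and split-hypergraph. If the paper instead requires, for example, that $K$ be maximal, one could move vertices of $I$ into $K$ while this keeps $K$ a clique.
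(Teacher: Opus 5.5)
Your proof is correct and follows the paper's approach: the complement is obtained by flipping the bits from position $k$ onward, and the split partition is the vertices with bit $0$ versus those with bit $1$, exactly as in the paper. For item 1, the paper deletes one vertex at a time and treats separately the case of removing one of $v_1,\dots,v_{k-1}$ when $b_k=1$ (the old $k$-th entry must become $0$); your direct restriction to an arbitrary $S$, with the first $k-1$ bits reset to $0$, covers all cases at once because those bits never affect which $k$-subsets are edges.
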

\begin{proof}
\begin{itemize}
    \item[1.] It suffices to show that by removing a single vertex of a $k$-uniform threshold hypergraph we obtain a $k$-uniform threshold hypergraph. That can be verified as follows.

If we remove a vertex $i$ corresponding to a binary entry $1$, 
then will simply remove the corresponding entry from the binary sequence.
If we remove a vertex $i$ corresponding to a binary entry $0$, then we have two cases:
if $i\leq k-1$ and the $k$-th entry in the binary sequence is $1$, removing $i$ will remove the $k$-th and first entry $1$ in the binary sequence;
otherwise, the corresponding entry $0$ in the binary sequence vanishes.

 \item[2.] If $\h$ is a $k$-uniform threshold hypergraph, then its complement has the binary sequence where the first $k-1$ entries remain $0$ and, after that, every entry $1$ in the binary sequence defining $\h$ becomes $0$ in its complement and every entry $0$ 
(after the first $k-1$ entries) becomes $1$.

 \item[3.] The set S of all vertices with entry 0 on the binary sequence is a stable set and the set C of all vertices with entry 1 on the binary sequence is a clique.
 
\end{itemize}

\end{proof}

The last  proposition indicates that the definition proposed here is an appropriate generalization of the concept of threshold graphs. Threshold hypergraphs have structural properties similar to those of threshold graphs, preserving the threshold condition in induced subhypergraphs and complement, as well as maintaining the property of being split.

For graphs in general, the adjacency matrix determines the graph univocally.
The same occurs for $k$-uniform threshold hypergraphs:
the adjacency matrix of a $k$-uniform threshold hypergraph determines the binary sequence associated to the hypergraph in a unique way.

\begin{proposicao}\label{matrix_sequence}
Let $\h$ and $\h^\prime$ be $k$-uniform threshold hypergraphs  given by the binary sequences $(b_1,\dots,b_n)_k$ and $(b_1^\prime,\dots,b_n^\prime)_k$, respectively, if $\h$ and $\h'$ have the same adjacency matrices, then $(b_1,\dots,b_n)_k=(b_1^\prime,\dots,b_n^\prime)_k$.
\end{proposicao}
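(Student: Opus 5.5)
We will show that the binary sequence can be read off from the adjacency matrix, entry by entry, using an explicit formula for the entries $a_{i,j}$ in terms of $(b_1,\dots,b_n)_k$. The key quantity is the entry $a_{i-1,i}$ (or more generally $a_{j,i}$ with $j<i$). Fix $j<i$ and count edges containing both $v_j$ and $v_i$. Every such edge ends at some vertex $v_m$ with $m\ge i$ and $b_m=1$. If $m=i$, the remaining $k-2$ vertices are chosen freely among $\{v_1,\dots,v_{i-1}\}\setminus\{v_j\}$. If $m>i$, the remaining $k-3$ vertices are chosen among $\{v_1,\dots,v_{m-1}\}\setminus\{v_i,v_j\}$. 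This gives
\[
a_{j,i}=b_i\binom{i-2}{k-2}+\sum_{m=i+1}^{n} b_m\binom{m-3}{k-3},
\]
with the convention $\binom{r}{s}=0$ for $s<0$ or $r<s$. Notably, this value does not depend on $j$, as long as $j<i$.

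Next, we recover the $b_i$ one at a time from $i=n$ downward. The first $k-1$ entries are $0$ by definition, so only $i\ge k$ matters. Take two consecutive columns. For $j<i<n$ we have $a_{j,i}$ and $a_{j,i+1}$ with $j<i$, and subtracting gives
\[
a_{j,i}-a_{j,i+1}=b_i\binom{i-2}{k-2}+b_{i+1}\binom{i-2}{k-3}-b_{i+1}\binom{i-1}{k-2}.
\]
By Pascal's rule, $\binom{i-1}{k-2}-\binom{i-2}{k-3}=\binom{i-2}{k-2}$, so the difference equals $(b_i-b_{i+1})\binom{i-2}{k-2}$. For the last index, $a_{j,n}=b_n\binom{n-2}{k-2}$, and since $n\ge k$ this binomial is positive. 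Hence $b_n$ is determined by the matrix. Then, for each $i$ with $k\le i\le n-1$, we have $\binom{i-2}{k-2}>0$ because $i-2\ge k-2$, so $b_i=b_{i+1}+(a_{1,i}-a_{1,i+1})/\binom{i-2}{k-2}$ is determined by backward induction. We use $j=1$, which satisfies $j<i$ since $i\ge k\ge 2$.

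Two sequences giving the same matrix therefore yield identical $b_i$ for $i\ge k$, and they agree on the first $k-1$ entries by definition. So the sequences coincide.

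The main obstacle is getting the counting formula for $a_{j,i}$ exactly right, in particular the case split by the endpoint $m$ and the independence from $j$. The degenerate case $k=2$ also needs care: there $\binom{m-3}{-1}=0$, so only the $m=i$ term survives and $a_{j,i}=b_i$, which is consistent with the formula. A simpler alternative, if the telescoping is delicate, is to compare the sequences directly. Suppose $i$ is the largest index where they differ. Then all $m>i$ contribute equally to $a_{1,i}$, and the discrepancy $\pm\binom{i-2}{k-2}\neq 0$ gives a contradiction. I would present this largest-differing-index argument as the actual proof.
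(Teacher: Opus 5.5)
Your proof is correct. The largest-differing-index argument you say you would present is exactly the paper's proof: take the last index $j$ where the sequences differ and compare $a_{1,j}$. Edges ending beyond $v_j$ contribute equally to both hypergraphs, while edges ending at $v_j$ contribute $\binom{j-2}{k-2}>0$ to only one of them.

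Your closed formula for $a_{j,i}$ and the Pascal-rule telescoping are a correct, more constructive version of the same count. They recover the sequence explicitly from the first row. They also treat the case $j=n$ uniformly, which the paper handles separately with a connectivity remark.
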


\begin{proof} Let $\h$ and $\h^\prime$ be two $k$-uniform threshold hypergraphs given by the binary sequences $(b_1, \dots, b_n)_k$ and $(b_1^\prime,\dots,b_n^\prime)_k$, respectively.
If the sequences are not the same, then neither will be the adjacency matrices, say $A$ and $A^\prime$, respectively.
In fact, let $j=\mathsf{max}\{i; b_i\neq b_i^\prime\}$ and suppose that $b_j=1$ and $b^\prime_j=0$. We can suppose that $k\leq j<n$ since if $j=n$ then one of the hypergraphs is connected and the other is not. But in this case the last row of the adjacency matrices will be different.
We claim that $a_{1, j}\neq a^\prime_{1, j}$.

We have two types of edges containing $v_1$ and $v_j$ in $\h$:
edges Type 1, those that contain $v_1,v_j$ and $v_l$ for some $l>j$ 
(that is, edges not ending at $v_j$)
and edges Type 2, those that do not contain any vertex added after $v_j$ 
(that is, edges ending at $v_j$). 
In $\h^\prime$ the only edges containing $v_1$ and $v_j$ are of the first type, 
since $b^\prime_j=0$ does not allow to have an edge ending at $v_j$. 
Note that the cardinality of the sets of Type 1 edges in both hypergraphs are the same, since $b_i=b_i^\prime$ for all $i>j$. Let's denote by $T_1$ and $T_2$ the cardinality of the sets of Type 1 and Type 2 edges in $\h$, respectively.
Then
$$a_{1,j}=T_1+T_2=T_1+\binom{j-2}{k-2}{>} T_1=a^\prime_{1,j}.$$
\end{proof}

In other words, from Proposition~\ref{matrix_sequence}, it can be deduced that each matrix correspond to a unique $k$-uniform threshold hypergraph, which makes the study of threshold hypergraphs based on matrices more interesting, as mentioned in the introduction.

\section{Spectral properties of threshold hypergraphs}

We recall that threshold graphs have well-known spectral properties, considering the adjacency matrix, as can be seen in \cite{irene} and \cite{trevisan}. Results on eigenvalues of other matrices associated with threshold graphs are also known, see for example \cite{Hammer,Vinagre}.

We begin this section by introducing a useful notation.

\begin{definition}
Let $\h$ be a $k$-uniform threshold hypergraph given by the binary sequence $(b_1, b_2, \dots, b_n)_k$.
We define $C(a_1, a_2, \dots, a_r)_k$, the \emph{short sequence} associated to hypergraph $\h$, as follows:
\begin{itemize}
\item If $b_k=0$, we define $a_1$ as the number of zeros at the first block at beginning of the binary sequence, $a_2$ as the number of ones at the second block of the binary sequence, then $a_3$ as the number of zeros in the third  block of the binary sequence, and so on:
\[
a_{2t+1}=\#\{i;b_j=0\mbox{ for all $j$ such that  } {\sum_{l=1}^{2t}a_l}<j\leq i\}
\]
and
\[
a_{2t}=\#\{i;b_j=1\mbox{  for all $j$ such that }{\sum_{l=1}^{2t-1}a_l}<j\leq i\},
\]
for $0\leq t\leq\frac{(r-1)}{2}$ (if $r$ is odd) or $0\leq t\leq\frac{r}{2}$ (if $r$ is even).
\item If $b_k=1$, we define $a_1$ as the number of zeros and ones at the first two blocks of the binary sequence, $a_2$ as the number of zeros in the third block of the binary sequence, and so on.
\end{itemize}
\end{definition}
Note that the short sequence is recursively defined.

From now on, we will identify a $k$-uniform threshold hypergraph $\h$ with its short sequence $C(a_1,a_2,\dots,a_r)_k$, 
using the notation in \cite{irene} mentioned above. 

\begin{exemplo} Let $\h_1$ be the $3$-uniform threshold hypergraph given by the binary sequence $(0,0,0,0,0,1,1,0,1,1,1,0,0,0,1)_3$. We have $b_3=0$, then  
$$(\underbrace{0,0,0,0,0}_5, \underbrace{1,1}_2, \underbrace{0}_1, \underbrace{1,1,1}_3, \underbrace{0,0,0}_3, \underbrace{1}_1)$$
and the short sequence associated to $\h_1$ is  $C(5,2,1,3,3,1)_3.$

 Let  $\h_2$ be the $4$-uniform threshold hypergraph  given by the binary sequence $(0,0,0,1,1,1,1,0,1,1,0,0,0,1)_4$. We have $b_4=1$, then
$$(\underbrace{0,0,0,1,1,1,1}_7, \underbrace{0}_1, \underbrace{1,1}_2, \underbrace{0,0,0}_3, \underbrace{1}_1)$$
and and the short sequence associated to $\h_2$ is    $C(7,1,2,3,1)_4.$
\end{exemplo}

In the case of threshold graphs, that is, $2$-uniform threshold hypergraphs, 
this short sequence coincides with the sequence introduced in \cite{irene}.

We may restrict our attention to connected hypergraphs, hence $b_n = 1$. 
Note that $r$ is even when $b_k = 0$ and odd when $b_k = 1$. 

\begin{definition} Let $\h$ be a $k$-uniform threshold hypergraph given by short sequence $C(a_1,a_2,\dots,a_r)_k$. 
We say that the vertices $v_i$ and $v_j$ belong to the same \emph{block} if there exists $t$ with $1\leq t\leq r$ such that 
$$\displaystyle\sum_{l=1}^{t-1}a_l<i,j\leq\sum_{l=1}^{t}a_l.$$
In this case, we say that $v_i$ and $v_j$ belong to the $t$-th block.
\end{definition}

In standard graph terminology, vertices within the same block are referred to as twins. If the vertices are in a same block of ones, they are called true twins; if they are in a block of zeros, they are called false twins.

\begin{proposicao} \label{intercambio}
Let $\h$ be a $k$-uniform threshold hypergraph given by short sequence $C(a_1,a_2,\dots,a_r)_k$. 
If $x$ and $y$ are vertices belonging to the same block, then $x\ll y$ and $y\ll x$.
\end{proposicao}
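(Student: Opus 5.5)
Since $\ll$ is defined symmetrically in $x$ and $y$, it suffices to fix $x<y$ in the same block and prove both $x\ll y$ and $y\ll x$. Proposition~\ref{prop:good_def} does not help here: it only gives one of the two relations. Instead I would verify each relation directly from Definition~\ref{binary}, with one case analysis on the position of the maximal vertex of an edge.

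\textbf{The key observation.} Take any $(k-1)$-subset $S\subseteq V-\{x,y\}$, and compare $e=S\cup\{x\}$ with $e'=S\cup\{y\}$. Let $m=\mathsf{max}(S)$, so the edge $e$ ends at $\max(m,x)$ and $e'$ ends at $\max(m,y)$. There are two cases.
\begin{enumerate}
\item If $m>y$, both sets end at $m$, so both are edges or neither is.
\item If $m<y$, then $m<x$ as well. No vertex of $S$ lies strictly between $x$ and $y$ in the order, because all such vertices are greater than $m$. So $e$ ends at $x$ and $e'$ ends at $y$. Whether they are edges is decided by $b_x$ and $b_y$ respectively, and these coincide whenever $x,y$ lie in the same block of a constant run of the binary sequence.
\end{enumerate}
In both cases $e\in E\iff e'\in E$, which gives $x\ll y$ and $y\ll x$ at once.

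\textbf{The main obstacle.} The argument needs $b_x=b_y$, but this can fail in the first block of the short sequence when $b_k=1$. There, $a_1$ merges the leading zeros $v_1,\dots,v_{k-1}$ with the following ones. So I would treat the case where $x$ is among the first $k-1$ vertices and $b_y=1$, with $y$ in the first block, separately.

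In case (1) nothing changes, since both sets end at the same vertex. In case (2) we have $m<y$, and $x$ is one of the first $y-1$ vertices. The vertices of $S$ are then distinct elements of $\{v_1,\dots,v_{y-1}\}-\{x\}$. This set has $y-2$ elements, so $y-2\ge k-1$, that is $y\ge k+1$. Every vertex from $v_k$ up to $y$ has $b=1$. Hence $e'$ ends at a pseudodominant vertex and is an edge. Also $e$ ends at $x$ with $b_x=0$, so $e\notin E$, unless $e$ actually ends at some element of $S$ above $x$.

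Taking the ordering more carefully: in case (2) we have $m<x$, so all of $S$ lies below $x\le k-1$. This forces $|S|\le k-2<k-1$, which is impossible. So case (2) cannot occur, and the equivalence holds.

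To finish, I would check all sub-cases of $m<x$, $x<m<y$ and $m>y$. In the sub-case $x<m<y$, all of $S\cup\{x,y\}$ lies in the first block, and both sets end at vertices with $b=1$ whenever the maximal vertex is at least $v_k$. This confirms that the first-block subtlety is handled.
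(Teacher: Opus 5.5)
Your approach is the paper's: compare $S\cup\{x\}$ with $S\cup\{y\}$ according to where $m=\max S$ sits relative to $x<y$. But your case split has a genuine hole. In case (2) you claim that $m<y$ forces $m<x$, because all vertices strictly between $x$ and $y$ are greater than $m$. That justification assumes what it is meant to prove, and the claim is false. Nothing stops $S$ from containing vertices of the block that lie between $x$ and $y$: take, for instance, $x=v_s$ and $y=v_{s+2}$ in the same block with $\max S=v_{s+1}$. Then $S\cup\{x\}$ ends at $m$ while $S\cup\{y\}$ ends at $y$. Here comparing $b_x$ with $b_y$ is irrelevant; what you need is $b_m=b_y$. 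This is exactly the paper's case (3), $x<x_{k-1}<y$, which the paper settles by noting that blocks are contiguous runs, so $m$ lies in the same block as $y$. Your last paragraph does mention the sub-case $x<m<y$, but only inside the first block when $b_k=1$. For every other block it is never argued. Also, your statement ``so case (2) cannot occur'' in the obstacle paragraph rests on the false implication.

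The subtlety you raise, on the other hand, is real, and the paper's proof glosses over it. The paper asserts that two vertices in the same block are both pseudodominant or both not, which is false for $v_1$ and $v_k$ when $b_k=1$. One remark handles this and repairs your case split at the same time. Any $k$-subset of $V$ ends at some $v_i$ with $i\ge k$, and among the vertices $v_i$ with $i\ge k$, all those lying in a given block have the same value $b_i$. So in each of the three cases $m>y$, $m<x$, $x<m<y$, the two sets end either at the same vertex, or at two vertices of the same block each of index at least $k$. Hence $S\cup\{x\}\in E$ if and only if $S\cup\{y\}\in E$. With this remark the first block needs no separate treatment, and the back-and-forth in your obstacle paragraph becomes unnecessary.
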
 

\begin{proof} 
Let $\h=(V,E)$ be a $k$-uniform threshold hypergraph. 
Let $x$ and $y$ be vertices in $\h$ belonging to the same block.
Let $\{x_1,\ldots,x_{k-1}\}\subseteq V-\{x,y\}$, with $x_1<\cdots<x_{k-1}$. 
Since $x$ and $y$ belong to the same block,
$x$ is pseudodominant if, and only if, $y$ is pseudodominant.
Assume w.l.o.g. that $x<y$. 
We have three cases: 
(1) $x<y<x_{k-1}$, or 
(2) $x_{k-1}<x<y$, or 
(3) $x<x_{k-1}<y$.
\begin{enumerate}
\item
($x<y<x_{k-1}$)

In this case, both 
$\{y,x_1,\ldots,x_{k-1}\}$
and
$\{x,x_1,\ldots,x_{k-1}\}$
end at vertex $x_{k-1}$. Then
$\{x,x_1,\ldots,x_{k-1}\}\in E$ $\Longleftrightarrow$
$x_{k-1}$ is pseudodominant $\Longleftrightarrow$
$\{y,x_1,\ldots,x_{k-1}\}\in E$.
\item
($x_{k-1}<x<y$)

In this case, 
$\{y,x_1,\ldots,x_{k-1}\}$
ends at $y$ and
$\{x,x_1,\ldots,x_{k-1}\}$
ends at $x$. Then
$\{x,x_1,\ldots,x_{k-1}\}\in E$ $\Longleftrightarrow$
$x$ is pseudodominant $\Longleftrightarrow$
$y$ is pseudodominant $\Longleftrightarrow$
$\{y,x_1,\ldots,x_{k-1}\}\in E$.
\item
($x<x_{k-1}<y$)

In this case, 
$\{y,x_1,\ldots,x_{k-1}\}$
ends at $y$ and
$\{x,x_1,\ldots,x_{k-1}\}$
ends at $x_{k-1}$. 
Besides that, since $x$ and $y$ belong to the same block,
$x_{k-1}$ and $y$ also belong to the same block.
Hence, 
$x_{k-1}$ is pseudodominant if, and only if, $y$ is pseudodominant.
Then
$\{x,x_1,\ldots,x_{k-1}\}\in E$ $\Longleftrightarrow$
$x_{k-1}$ is pseudodominant $\Longleftrightarrow$
$y$ is pseudodominant $\Longleftrightarrow$
$\{y,x_1,\ldots,x_{k-1}\}\in E$.
\end{enumerate}
In any case,
$\{x,x_1,x_2,\dots,x_{k-1}\}\in E\Longleftrightarrow\{y,x_1,x_2,\dots,x_{k-1}\}\in E$.
\end{proof}

As a corollary we have that columns $i$ and $j$ of the adjacency matrix $A(\h)$ of $\h$ differ only in the elements of the diagonal. 

\begin{corolario} \label{colunas} 
Let $\h$ be a $k$-uniform threshold hypergraph given by a binary sequence 
and $A(\h)=(a_{i,j})$ its adjacency matrix.
If $v_i$ and $v_j$ are vertices belonging to the same block, then $a_{s,i}=a_{s,j}$ for all $s\neq i,j$. 
\end{corolario}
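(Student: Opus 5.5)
Fix $s\neq i,j$. The plan is to write the entry $a_{s,i}$ as a count of edges and compare it with $a_{s,j}$ using Proposition~\ref{intercambio}. By definition,
$$a_{s,i}=|\{e\in E: v_s,v_i\in e\}|,$$
and the edges counted here split into those containing $v_j$ and those not containing $v_j$. The same split applies to $a_{s,j}$, with the roles of $i$ and $j$ exchanged. In both counts the edges containing all three of $v_s,v_i,v_j$ are identical, so it is enough to find a bijection
$$\{e\in E: v_s,v_i\in e,\ v_j\notin e\}\longrightarrow\{e\in E: v_s,v_j\in e,\ v_i\notin e\}.$$

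The natural map is $e\mapsto (e\setminus\{v_i\})\cup\{v_j\}$. Take $e$ in the first set and put $F=e\setminus\{v_i\}$. Then $F$ is a $(k-1)$-subset of $V-\{v_i,v_j\}$, and it contains $v_s$. Since $v_i$ and $v_j$ lie in the same block, Proposition~\ref{intercambio} gives both $v_i\ll v_j$ and $v_j\ll v_i$. The relation $v_i\ll v_j$ yields that $F\cup\{v_j\}$ is an edge. This edge contains $v_s$ and $v_j$ and does not contain $v_i$, so the map does land in the second set.

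The inverse map swaps $v_j$ back to $v_i$, and it sends the second set into the first by the relation $v_j\ll v_i$. The two maps are clearly mutually inverse, so the two sets have the same cardinality, and therefore $a_{s,i}=a_{s,j}$.

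I expect no real obstacle here. The only point requiring care is to apply the replacement relation $\ll$ only to $(k-1)$-sets avoiding both $v_i$ and $v_j$, which is exactly why the edges containing both vertices have to be separated out first. It is also worth noting that the hypothesis $s\neq i,j$ is what guarantees that $v_s$ survives the swap.
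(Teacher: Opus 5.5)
Your proof is correct and uses the same swap bijection as the paper: $e\mapsto(e-\{v_i\})\cup\{v_j\}$, justified by Proposition~\ref{intercambio}. Setting aside the edges that contain $v_s$, $v_i$ and $v_j$ together is a genuine refinement of the paper's version. The paper applies the map to all of $E(s,i)$; on such edges it yields a $(k-1)$-set rather than an edge, and Proposition~\ref{intercambio} only covers $(k-1)$-sets avoiding both vertices.
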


\begin{proof} 
Let $s\neq i,j$ and define $E(a,b):=\{e\in E;v_a,v_b\in e\}$.
Since $a_{s,i}=|E(s,i)|$ and $a_{s,j}=|E(s,j)|$, it is enough to construct an isomorphism between these two sets.

We define $\varphi:E(s,i)\to E(s,j)$ to be such that $\varphi(e)=(e-\{v_i\})\cup\{v_j\}$,
that is, we are taking each edge $e$ that contains $v_i$ and $v_s$ and replacing $v_i$ with $v_j$. 
The new set $(e-\{v_i\})\cup\{v_j\}$ contains $v_s$ and $v_j$ and it is an edge (by Proposition \ref{intercambio}). 
It is easy to see that $\varphi$ is an isomorphim.
\end{proof}

The next result provides eigenvalues and their multiplicities directly from the entries of the hypergraph adjacency matrix.

\begin{teorema}\label{spec} 
Let $\h$ be a $k$-uniform threshold hypergraph given by short sequence $C(a_1,a_2,\dots,a_r)_k$ 
and $A(\h)=(a_{i,j})$ its adjacency matrix.
Then $-a_{s, s+1}$ is an eigenvalue of $A(\h)$ with multiplicity at least $a_j-1$, where $s=a_1+\cdots+a_{j-1}+1$. 
That is, $s$ is the position of the first vertex in the $j$-th block. 
\end{teorema}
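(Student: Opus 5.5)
Throughout, fix the $j$-th block, write $B=\{v_s,v_{s+1},\ldots,v_{s+a_j-1}\}$ for its vertices, and let $u=e_s-e_{s+1}$ denote the difference of the two standard basis vectors for $v_s,v_{s+1}$. The plan is to exhibit $a_j-1$ linearly independent eigenvectors supported on $B$, namely the difference vectors
$$x^{(m)}=e_s-e_{s+m},\qquad m=1,\ldots,a_j-1.$$
These are clearly independent, since $e_{s+m}$ occurs only in $x^{(m)}$. If $a_j=1$ the statement is vacuous, so assume $a_j\geq 2$.

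\textbf{Step 1 (entries off the block).} For a row index $t\notin\{s,s+m\}$, compute
$$(A(\h)x^{(m)})_t=a_{t,s}-a_{t,s+m}.$$
By Corollary~\ref{colunas}, this vanishes because $v_s$ and $v_{s+m}$ lie in the same block. This covers rows $t$ outside $B$ and also rows $t\in B$ other than $s,s+m$.

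\textbf{Step 2 (the two remaining rows).} The two rows left are
$$(A(\h)x^{(m)})_s=a_{s,s}-a_{s,s+m},\qquad (A(\h)x^{(m)})_{s+m}=a_{s+m,s}-a_{s+m,s+m}.$$
Since the diagonal entries are $a_{i,i}=0$ (an edge is a set, and the definition counts edges containing $v_i$ and $v_j$, which presumably is taken as zero when $i=j$, as usual for hypergraph adjacency matrices), these become $-a_{s,s+m}$ and $a_{s,s+m}$. Hence
$$A(\h)x^{(m)}=-a_{s,s+m}\,x^{(m)}.$$

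\textbf{Step 3 (constant eigenvalue across the block).} It remains to show $a_{s,s+m}=a_{s,s+1}$ for every $m$. Apply Corollary~\ref{colunas} with the pair $v_{s+1},v_{s+m}$ (same block) and row index $s\neq s+1,s+m$, for $m\geq 2. $ This gives $a_{s,s+1}=a_{s,s+m}$. So all $a_j-1$ vectors are eigenvectors for the common eigenvalue $-a_{s,s+1}$, and the multiplicity is at least $a_j-1$.

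\textbf{Main obstacle.} The delicate point is the diagonal convention in Step 2. If one instead allowed $a_{i,i}$ to equal the degree of $v_i$, one would additionally need $a_{s,s}=a_{s+m,s+m}$; this also follows from the twin property, since the map $e\mapsto (e\setminus\{v_s\})\cup\{v_{s+m}\}$ of Proposition~\ref{intercambio} is a bijection on edges containing exactly one of the two vertices. The eigenvalue would then shift to $a_{s,s}-a_{s,s+1}$. With the standard zero diagonal the statement follows exactly as claimed.
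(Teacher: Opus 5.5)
Your proof is correct and is the same as the paper's: the paper also takes the difference vectors $e_{s,s+1},\dots,e_{s,s+a_j-1}$, invokes Corollary~\ref{colunas}, and notes their linear independence. You make explicit two steps the paper calls ``immediate''. One is the computation of rows $s$ and $s+m$, which needs the zero-diagonal convention; the paper's explicit matrices, with $\J-\I$ and $\mathbf{0}$ diagonal blocks, confirm that convention. The other is the equality $a_{s,s+m}=a_{s,s+1}$.
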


\begin{proof} 
Let's denote by $e_{a,b}$ the $n$-dimensional vector such that the coordinate $a$ is equal to $1$, the coordinate $b$ is equal to $-1$, and the others are zero.
The vectors $e_{s,s+1},\dots,e_{s,s+a_j-1}$ are eigenvectors associated with $-a_{s, s+1}$, this follows immediately from the Corollary \ref{colunas}. Moreover, they are linearly independent vectors, which concludes the proof.
\end{proof}

\begin{corolario} Let $\h$ be a $k$-uniform threshold hypergraph given by short sequence $C(a_1,a_2,\dots,a_r)_k$. 
Then Theorem~\ref{spec} provides $n-r$ eigenvalues of $A(\h)$.
\end{corolario}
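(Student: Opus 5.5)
Summing the multiplicities guaranteed by Theorem~\ref{spec} over the blocks, we get
$$\sum_{j=1}^{r}(a_j-1)=\Big(\sum_{j=1}^{r}a_j\Big)-r=n-r,$$
because the blocks partition $V=\{v_1,\dots,v_n\}$. So the plan is to check that these counted eigenvectors are linearly independent as a whole family, not merely within each block. Then Theorem~\ref{spec} really supplies $n-r$ eigenvalues, counted with multiplicity.

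\textbf{Step 1.} Use the definition of the short sequence to see that the blocks partition $V$. The $j$-th block consists of the positions $a_1+\cdots+a_{j-1}+1,\dots,a_1+\cdots+a_j$, so the $a_j$ add up to $n$. The only subtlety is the case $b_k=1$. There the first block is the union of the initial run of $k-1$ zeros with the following run of ones. Even so, the block decomposition used in Theorem~\ref{spec} still partitions $V$ into $r$ consecutive intervals. One should also check that Theorem~\ref{spec} really applies to this merged first block: its vertices must be pairwise related as in Proposition~\ref{intercambio}. I expect this to hold, because with $b_k=1$ any $k$-subset of $\{v_1,\dots,v_{a_1}\}$ is an edge, so those vertices are interchangeable.

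\textbf{Step 2.} Collect the eigenvectors $e_{s_j,s_j+1},\dots,e_{s_j,s_j+a_j-1}$ over all $j$, where $s_j$ is the first position of block $j$. Each such vector is supported inside block $j$. Vectors from different blocks therefore have disjoint supports, so the whole family of $n-r$ vectors is linearly independent. Within a block, independence was already noted in Theorem~\ref{spec}.

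\textbf{Step 3.} Group the vectors by eigenvalue. For any value $\mu$, the eigenspace of $\mu$ contains all vectors coming from blocks $j$ with $-a_{s_j,s_j+1}=\mu$. Hence the multiplicity of $\mu$ is at least the sum of the corresponding $a_j-1$, and the total count is $n-r$.

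The main obstacle is the justification in Step 1 for the merged first block when $b_k=1$. Everything else is bookkeeping.
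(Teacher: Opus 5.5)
Your proposal is correct and follows the paper's argument, which is simply the count $\sum_{j=1}^{r}(a_j-1)=n-r$ obtained from $a_1+\cdots+a_r=n$. Your two additional checks are sound and are points the paper leaves implicit: joint independence across blocks via disjoint supports (which matters when two blocks yield the same eigenvalue), and the twin property of the merged first block when $b_k=1$ (to finish that one, note that if the $(k-1)$-set has its largest vertex outside the block, both $k$-sets end at that same vertex).
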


In fact, each block $i$ provides $a_i-1$ eigenvalues, as $a_1+\dots+a_r=n$, then it follows  trivially  that $(a_1-1)+\dots+(a_r-1)=n-r.$

Now we characterize the eigenvalues of $A(\h)$ given in Theorem \ref{spec}.
Let us begin introducing some notations.
 
Let $\h$ be a $k$-uniform threshold hypergraph given by $C(a_1,a_2,\dots,a_r)_k$. 
If $a_j\geq 2$ for some $j$, then $-a_{s, s+1}$ is an eigenvalue of $A(\h)$, where $s=a_1+\cdots+a_{j-1}+1$, from Theorem \ref{spec}.   Remembering that $a_{s, s+1}=|E(s,s+1)|$ we need to compute the number of edges containing $v_s$ and $v_{s+1}$.
 
We first analyze the case in which $b_k=0$. In this case $r$ is an even number, say $r=2m$.
\begin{itemize}
\item 
If $a_j\geq 2$ and $j$ is an odd number, say $j=2t-1$ for some $1\leq t\leq m$.
In this case, vertices $v_s$ and $v_{s+1}$ belong to the same block of zeros. 
Then if $e\in E$ is an edge containing both vertices we have that $e$ must end at a pseudodominant vertex belonging to a posterior block, that is, one of the blocks ${2t},{2t+2},\dots,{2m}$.

The numbers of edges $e$ that ends at vertices belonging to block $2\ell$ are:

\[
N_{2\ell}=\sum_{j=1}^{a_{2\ell}}\binom{a_1+\cdots+a_{2\ell}-3+j}{k-3},
\]

 where $\binom{a_1 + \cdots + a_{2\ell} - 3 + j}{k - 3}$ is equal to the  cardinality of the set of edges that contain the vertices $v_s$ and $v_{s+1}$ and end at the $j^{\text{th}}$ vertex of the $2\ell$-block.

Then with the previous notation we have that:
$$
-\bigg(\sum_{\ell=t}^{m}N_{2\ell}\bigg)
$$
is an eigenvalue of $A(\h)$ with multiplicity at least $a_{2t-1}-1$.

\item 
If $a_j\geq 2$ and $j$ is an even number, say $j=2t$ for some $1\leq t\leq m$, then vertices $v_s$ and $v_{s+1}$ belong to the same block of ones. Then if $e\in E$ is an edge containing both vertices we have that $e$ could end at a vertex that corresponds to a posterior block of ones, that is, associated to one of the blocks ${2t+2},\dots,{2m}$ (Type 1 edges) or $e$ must ends in a vertex belonging to the same block as $v_s$ and $v_{s+1}$ (the $j^{\text{th}}$ block).

The number $T_1$ of Type 1 edges is: 
$$T_1=\sum_{\ell=t+1}^{m}N_{2\ell},$$
where $N_{2\ell}$ is defined as before.

The number of edges ending in a vertex of the same block still needs to be calculated.
In this case:
$$T_2=\binom{a_1+\cdots+a_{2t}-2}{k-2},$$
since the number of such edges is the same of the number of edges ending at  the last vertex on the block $v_{a_1+\cdots+a_{2t}}$ that contains $v_s$ by Proposition~\ref{intercambio}.
 
We have that: 
$$-\displaystyle\left(\sum_{\ell=t+1}^{m}N_{2\ell}\right)-\binom{a_1+\cdots+a_{2t}-2}{k-2}$$
is an eigenvalue of $A$ with multiplicity at least $a_{2t}-1$.
\end{itemize}

When $b_k=1$ and $r=2m-1$ is an odd number, the computations are as in the previous case, taking into account now that the blocks in even positions correspond to zeros in the binary sequences and the ones in odd positions to ones. In this case, for $1\leq \ell\leq m$, we have $N_{2\ell-1}=$ 
$$\binom{a_1+\cdots+a_{2\ell-2}-2}{k-3}+\binom{a_1+\cdots+a_{2\ell-2}-1}{k-3}+\cdots+\binom{a_1+\cdots+a_{2\ell-1}-3}{k-3},$$

and

$$T_2=\binom{a_1+\cdots+a_{2t-1}-2}{k-2}.$$

Summarizing, we just prove the following result.

\begin{teorema}\label{caracterizacao} 
Let $\h$ be a connected $k$-uniform threshold hypergraph given by short sequence $C(a_1,a_2,\dots,a_{r})_k.$ For each $j$ such that $a_j\geq 2$ we obtain an eigenvalue of $A(G)$ and its multiplicity as follows:
\begin{itemize}
\item[\rm 1)] Suppose that $r=2m$ (here $b_k=0$). Let $N_{2\ell}$ be as before.
\end{itemize}

\begin{itemize}
\item[\rm a)] If $j=2t-1$ for some integer $t$, then $-\bigg(\displaystyle\sum_{\ell=t}^{m}N_{2\ell}\bigg)$ is an eigenvalue of $A(\h)$
with multiplicity at least $a_{2t-1}-1$.
\item[\rm b)] If $j=2t$ for some integer $t$, then $-\Bigg(\displaystyle\sum_{\ell=t+1}^{m}N_{2\ell}+\binom{a_1+\cdots+a_{2t}-2}{k-2}\Bigg)$ is an eigenvalue of $A(\h)$ with multiplicity at least $a_{2t}-1$.
\end{itemize}

\begin{itemize}
\item[\rm 2)] Suppose that $r=2m-1$ (here $b_k=1$). Let $N_{2\ell-1}$ be as before.
\end{itemize}
\begin{itemize}
\item[\rm a)] If $j=2t-1$ for some integer $t$, then 
$$-\Bigg(\displaystyle\left(\sum_{\ell=t+1}^{m}N_{2\ell-1}\right)+\binom{a_1+\cdots+a_{2t-1}-2}{k-2}\Bigg)$$ 
is an eigenvalue of $A(\h)$ with multiplicity at least $a_{2t-1}-1$.
\item[\rm b)] If $j=2t$ for some integer $t$, then 
$-\bigg(\displaystyle\sum_{\ell=t+1}^{m}N_{2\ell-1}\bigg)$
is an eigenvalue of $A(\h)$ with multiplicity at least $a_{2t}-1$.
\end{itemize}
\end{teorema}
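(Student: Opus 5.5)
The plan is to combine Theorem~\ref{spec} with an explicit count of $a_{s,s+1}=|E(s,s+1)|$, where $v_s,v_{s+1}$ are the first two vertices of the $j$-th block. Theorem~\ref{spec} already shows that $-a_{s,s+1}$ is an eigenvalue with multiplicity at least $a_j-1$. The eigenvectors are $e_{s,s+i}$, and Corollary~\ref{colunas} guarantees that $A e_{s,s+i}=(a_{s,s}-a_{s+i,s})$-type combinations collapse to $-a_{s,s+1}e_{s,s+i}$. Here one uses that the diagonal is zero and that $a_{s,s+i}=a_{s+i,s}$ coincides with $a_{s,s+1}$, again by Corollary~\ref{colunas} applied within the block. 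So everything reduces to computing $|E(s,s+1)|$ as a sum of binomials.

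\textbf{Partition by endpoint.} Every edge $e\supseteq\{v_s,v_{s+1}\}$ ends at a unique pseudodominant vertex $v_p$ with $p\ge s+1$. I would therefore partition $E(s,s+1)$ by the endpoint $p$. For a fixed pseudodominant $v_p$ with $p>s+1$, the edges ending at $v_p$ and containing $v_s,v_{s+1}$ correspond bijectively to $(k-3)$-subsets of $\{v_1,\dots,v_{p-1}\}\setminus\{v_s,v_{s+1}\}$. Their number is $\binom{p-3}{k-3}$. If $v_{s+1}$ itself is pseudodominant, the endpoint may also be $p=s+1$. In that case the count is $(k-2)$-subsets of $\{v_1,\dots,v_{s-1}\}$.

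\textbf{Summing over blocks.} Next I sum over blocks of ones. For a later block of ones occupying positions $a_1+\cdots+a_{2\ell-1}+1,\dots,a_1+\cdots+a_{2\ell}$, the choice $p=a_1+\cdots+a_{2\ell-1}+i$ gives $\binom{a_1+\cdots+a_{2\ell}-3+i'}{k-3}$ after reindexing, which yields $N_{2\ell}$. (The $b_k=1$ case is analogous, giving $N_{2\ell-1}$.) When the $j$-th block is a block of zeros, only strictly later blocks of ones contribute, and those are $\ell=t,\dots,m$. This gives case 1a), and similarly 2b).

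\textbf{Same-block contribution.} When the $j$-th block is a block of ones, the later blocks contribute $\sum_{\ell\ge t+1}N_{2\ell}$. The block's own vertices $p\in\{s+1,\dots,\text{end of block}\}$ must also be counted. Summing $\binom{p-3}{k-3}$ for $p>s+1$ together with the $p=s+1$ term and applying the hockey-stick identity, the total telescopes to $\binom{a_1+\cdots+a_{2t}-2}{k-2}$. Equivalently, one can argue as the paper suggests: by Proposition~\ref{intercambio}, move the pair to the last vertex of the block and count the $(k-2)$-subsets of all earlier vertices other than the pair's partner.

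\textbf{Main obstacle.} I expect this same-block count to be the delicate step, together with keeping the index shifts consistent between the $b_k=0$ and $b_k=1$ conventions. In particular, the first block in case 2 merges the initial $k-1$ zeros with the first ones, so I must check that pseudodominance and the binomial endpoints still match there. I would verify this with the hockey-stick identity $\sum_{q=0}^{M-1}\binom{q}{k-3}=\binom{M}{k-2}$ applied to $M=a_1+\cdots+a_{2t}-2$. I would also note that terms $\binom{q}{k-3}$ with $q<k-3$ vanish, which handles the merged initial block.
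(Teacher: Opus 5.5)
Your route is the paper's own. Theorem~\ref{spec} reduces everything to $a_{s,s+1}=|E(s,s+1)|$. You split these edges by the pseudodominant vertex $v_p$ where they end, each $p>s+1$ contributing $\binom{p-3}{k-3}$. For the same-block term $\binom{L-2}{k-2}$, with $L$ the last position of the block, you offer both the paper's device of trading $v_{s+1}$ for $v_L$ via Proposition~\ref{intercambio} and your telescoping $\binom{s-1}{k-2}+\sum_{p=s+2}^{L}\binom{p-3}{k-3}=\binom{L-2}{k-2}$; the latter is correct and is a genuine direct alternative. Your remark that the terms $\binom{q}{k-3}$ with $q<k-3$ vanish is what makes case 2a) with $t=1$ (the merged first block) come out right. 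The twin property you need there also holds. For $x,y$ in that block and a $(k-1)$-set $X$ avoiding them, either both $X\cup\{x\}$ and $X\cup\{y\}$ end at $\max X$, or both are $k$-subsets of $\{v_1,\dots,v_{a_1}\}$, and all of these are edges. This holds even though the proof of Proposition~\ref{intercambio} asserts that pseudodominance is constant on a block, which is false for this block.

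The one step that fails as written is ``after reindexing, which yields $N_{2\ell}$''. The $i$-th vertex of block $2\ell$ is $v_p$ with $p=a_1+\cdots+a_{2\ell-1}+i$, so your count is $\sum_{i=1}^{a_{2\ell}}\binom{a_1+\cdots+a_{2\ell-1}+i-3}{k-3}$. Rewriting it with $a_1+\cdots+a_{2\ell}$ on top forces $i'=i-a_{2\ell}\in\{1-a_{2\ell},\dots,0\}$, not $\{1,\dots,a_{2\ell}\}$. So no reindexing produces the displayed $N_{2\ell}=\sum_{j=1}^{a_{2\ell}}\binom{a_1+\cdots+a_{2\ell}-3+j}{k-3}$.

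The displayed formula has an index slip ($a_{2\ell}$ where $a_{2\ell-1}$ belongs), and taken literally it makes part 1a) false. For $C(5,1)_4$, i.e.\ $(0,0,0,0,0,1)_4$, one has $a_{1,2}=\binom{3}{1}=3$ and the spectrum is $\{6+\sqrt{216},(-3)^{4},6-\sqrt{216}\}$, whereas the displayed formula gives $N_2=\binom{4}{1}=4$. Your value is the one consistent with the paper's own worked family ($-\binom{n-3}{k-3}$ for $C(n-1,1)_k$) and with its case-2 formula for $N_{2\ell-1}$, which has the correct offset. So rather than asserting a reindexing, say explicitly that $N_{2\ell}$ must be read as $\sum_{i=1}^{a_{2\ell}}\binom{a_1+\cdots+a_{2\ell-1}+i-3}{k-3}$, and prove the theorem for that quantity. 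With this correction your argument is complete.
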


The others $r$ missing eigenvalues will be the eigenvalues of $B$, the equitable quotient matrix obtained from $A(\h)$ by taking the partition
$$
A(\h)=\begin{pmatrix} A_{11}&\cdots&A_{1r}\\
\vdots&\ddots&\vdots\\
A_{1r}&\cdots&A_{rr}
\end{pmatrix}
$$
where the blocks $A_{ij}$ are $a_i\times a_j$ matrices. Note that, for each submatrix $A_{ij}$, the sum of the elements in each row is constant (as a consequence of Corollary \ref{colunas}). This means that all vertices in the same block are twins, therefore they have the same neighbors.

\vspace{0.9mm}

We will finish this section by calculating the spectrum of three families of $k$-uniform threshold hypergraphs. These families have a small number of eigenvalues despite having an arbitrary number of vertices.

\subsection{Infinite family of $k$-uniform threshold hypergraphs with at most $3$ distinct eigenvalues}

Consider the class of $k$-uniform threshold hypergraphs given by the binary sequence $(0,\ldots,0,1)_k$, i.e., a class consisting of binary sequences where all entries are $0$ except the $n^{th}$ and last entry which is $1$. When $n = k$, the hypergraph $(0,\ldots,0,1)_k$ has only one edge containing all vertices.

If $k<n$, then $\h=C(n-1, 1)_k$, using the previous notation we have $r=2, m=1, t=1$ and $a_1\geq 2$. By Theorem \ref{caracterizacao} we have that:
$$-\bigg(\sum_{\ell=t}^{m}N_{2\ell}\bigg)=-\binom{n-3}{k-3}$$
is an eigenvalue of $A(\h)$ with multiplicity at least $n-2$.

The adjacency matrix $A(\h)$ can be partitioned as  
$$\A(\h) = \left[\begin{array}{ccc}
\binom{n-3}{k-3}(\J-\I)_{(n-1 \times n-1)} & | & \binom{n-2}{k-2}\J_{(n-1 \times 1)} \\ 
---------- & | & ------  \\ 
\binom{n-2}{k-2}\J_{(1 \times n-1)} & | & \mathbf{0}_{(1 \times 1)} 
\end{array}\right]. $$

In this case the quotient matrix $B$ is 
$$B=\left[\begin{array}{cc}
a(n-2) & b  \\ 
b(n-1) & 0 
\end{array}\right],$$ 
whose characteristic polynomial is $f(x)= x^2-a(n-2)x-b^2(n-1)$, with $a=\binom{n-3}{k-3}$ and $b=\binom{n-2}{k-2}$. Therefore its roots
$\alpha$ and $\beta$ are eigenvalues of $\A(\h)$.

We have that $\mathsf{spec}(\h)= \left\{\alpha,-\binom{n-3}{k-3}^{(n-2)},\beta\right\}$.

\begin{exemplo}
Consider the $3$-uniform threshold hypergraph $\h=(V,E)$, with $V=\{1,2,3,4,5\}$ and $$E= \{\{1,2,5\}\},\{1,3,5\},\{1,4,5\},\{2,3,5\},\{2,4,5\},\{3,4,5\}\}.$$ Its binary sequence is $(0,0,0,0,1)_3$. We have that: 
$$\mathsf{spec}(\h)=\{(7,68)^1, (-1)^3, (-4,68)^1\}.$$ 
\end{exemplo} 

\subsection{Infinite family of $k$-uniform threshold hypergraphs with at most 4 distinct eigenvalues}

Fix $j$, where $k \leq j \leq n-1$. This subclass consists of binary sequences where the first $j-1$ entries are $0$. After that, every entry is $1$: $(0,\dots, 0, 1,\dots, 1)_k$. Denote by $q$ the quantity of $1$ in the sequence, that is $q=n+1-j$. 
When $j=k$, ie, the $k^{th}$ entry is $1$, then the binary sequence $(0,\ldots,0,1,\ldots,1,1)_k$ is the complete $k-$uniform hypergraph on $n$ vertices.
\vspace{0.3cm}

If $j>k$ then $\h=C(j-1, n-j+1)_k$, that is $a_1=j-1\geq 2$ and $a_2=n-j+1\geq 2$. 
Using the previous notation we have that $r=2$ and $m=1$. Then 
\begin{itemize}
\item[\rm a)] $-\bigg(\sum_{\ell=1}^{1}N_{2\ell}\bigg)=\binom{j-3}{k-3} + \binom{j-2}{k-3} +\cdots+ \binom{n-3}{k-3} =\sum_{i=1}^q \binom{n-(i+2)}{k-3}$
is an eigenvalue of $A(\h)$ with multiplicity at least $j-2$.
\item[\rm b)] $-\sum_{\ell=2}^{1}N_{2\ell}-\binom{a_1+\cdots+a_{2t}-2}{k-2} = -\binom{n-2}{k-2} \text{ is an eigenvalue of $A(\h)$ }$
with multiplicity at least $n-j$.
\end{itemize}

The adjacency matrix is $\A(\h)=$
$$\left[\begin{array}{ccc}
 \left( \sum_{i=1}^q \binom{n-(i+2)}{k-3} \right)(\J-\I)_{(n-q \times n-q)} & | & \binom{n-2}{k-2}\J_{(n-q \times q)} \\ 
------------- & | & ------  \\ 
\binom{n-2}{k-2}\J_{(q \times n-q)} & | & \binom{n-2}{k-2} (\J-\I)_{(q \times q)} 
\end{array}\right]. $$

The quotient matrix $B$ of $\A(\h)$ is the $2\times 2$ matrix
$$B=\left[\begin{array}{cc}
a(n-q-1) & bq  \\ 
b(n-q) & b(q-1) 
\end{array}\right], $$ 
with $a=\left( \sum_{i=1}^q \binom{n-(i+2)}{k-3} \right)$ and $b=\binom{n-2}{k-2}$.

The characteristic polynomial of $B$ is 
$$f(x)=x^2-x\left(a(n-q-1)+b(q-1)\right)+a(n-q-1)b(q-1)-b^2q(n-q)$$
and we will denote by $\alpha$ and $\beta$ its roots.

We prove that 
$$\mathsf{spec}(\h)=\left\{\alpha,-\left(\sum_{i=1}^q \binom{n-(i+2)}{k-3}\right)^{(n-q-1)},-\binom{n-2}{k-2}^{(q-1)},\beta\right\}.$$

\begin{exemplo}
Consider the $3$-uniform threshold hypergraph $\h=(V,E)$ with $V=\{1,2,3,4,5\}$ and 
$$\begin{array}{r}
E=\{\{1,2,4\},\{1,3,4\},\{2,3,4\},\{1,2,5\},\{1,3,5\},\\
\{1,4,5\},\{2,3,5\},\{2,4,5\},\{3,4,5\}\}.
\end{array}$$
Its binary sequence is $(0,0,0,1,1)_3$. Its spectrum is given by
$$\mathsf{spec}(\h)=\{(10,86)^1,(-2)^2,(-3)^1,(-3,86)^1\}.$$ 
\end{exemplo}


\subsection{Infinite family of k-uniform threshold hypergraphs with at most 5 distinct eigenvalues}

This subclass consists of $k-$uniform thresholds given by binary sequences where the first $k-1$ entries are $0$, then the $k^{th}$ entry is $1$, then only zeros until the $n^{th}$ and last entry, which is $1$: $(0,\dots, 0, 1, 0,\dots, 0, 1)_k$.

We must have $n\geq k+2$, because we expect to exist at least one entry $0$ between the two entries $1$, since the hypergraphs $(0,\ldots,0,1)_k$ for $n=k$ and $(0,\ldots,0,1,1)_k$ for $n=k+1$ belongs to previously studied classes. 

In this case we have that $\h$ is given by $C(k,n-k-1, 1)_k$, where $n-k-1\geq 1$. 
From Theorem \ref{caracterizacao}, $-a_{12}$ is an eigenvalue of $A$ with multiplicipy at least $k-1$ and $-a_{(k+1)(k+2)}$ is an eigenvalue of $A(\h)$ with multiplicity at least $n-k-2$.

Let's calculate these two entries of $A(\h)$:
$$
a_{1,2}=|\{e\in E; v_1, v_2\in e\}|=\underbrace{\binom{k-3}{k-3}}_\text{edges ending at $v_k$}+\underbrace{\binom{n-3}{k-3}}_\text{edges ending at $v_n$}=1+\binom{n-3}{k-3}.
$$

Similarly we have 
$$
a_{k+1,k+2}=|\{e\in E; v_{k+1}, v_{k+2}\in e\}|=\underbrace{\binom{n-3}{k-3}}_\text{edges ending at $v_n$}=\binom{n-3}{k-3}.
$$

The adjacency matrix $A(\h)$ is 

$$\left[\begin{array}{ccccc}
\left(\binom{n-3}{k-3}+1\right)(\J-\I)_{(k \times k)} & | & \binom{n-3}{k-3}\J_{(k \times n-k-1)} & | & \binom{n-2}{k-2}\J_{(k \times 1)} \\ 
-------- & | & -------- & | & ------- \\
\binom{n-3}{k-3}\J_{(n-k-1 \times k)} & | & \binom{n-3}{k-3}(\J-\I)_{(n-k-1 \times n-k-1)} & | & \binom{n-2}{k-2}\J_{(n-k-1 \times 1)} \\ 
-------- & | & -------- & | & ------- \\ 
\binom{n-2}{k-2}\J_{(1 \times k)} & | & \binom{n-2}{k-2}\J_{(1 \times n-k-1)} & | & \mathbf{0}_{(1 \times 1)} 
\end{array}\right]. $$

The quotient matrix $B$ is a $3\times 3$ matrix given by
$$\left[\begin{array}{ccc}
(a+1)(k-1) & a(n-k-1) & b \\ 
ak & a(n-k-2) & b  \\ 
bk & b(n-k-1) & 0 
\end{array}\right], $$ 
where $a=\binom{n-3}{k-3}$ and $b=\binom{n-2}{k-2}$. Let's denote by  $\alpha$, $\beta$ and $\gamma$ the eigenvalues of $B$.
We have just proven that 
$$\mathsf{spec}(\h)=\left\{\alpha,\beta,-\binom{n-3}{k-3}^{(n-k-2)},-\left(\binom{n-3}{k-3}+1 \right)^{(k-1)},\gamma\right\}.$$

\begin{exemplo}
Consider the $3$-uniform threshold hypergraph $\h=(V,E)$ with $V=\{1,2,3,4,5\}$ and 
$$E=\{\{1,2,3\},\{1,2,5\},\{1,3,5\},\{1,4,5\},\{2,3,5\},\{2,4,5\},\{3,4,5\}\}.$$
Its binary sequence is $(0,0,1,0,1)_3$.
Its spectrum is given by
$$\mathsf{spec}(\h)=\{(8,71)^1, (-0,49)^1, (-2)^2, (-4,22)^1\}.$$ 
\end{exemplo}

\section{Conclusions}

Assuming $\binom{\hfill n}{-1}=0$ (there is no subset of $\{1,\ldots,n\}$ with $-1$ element), notice that e\-very matrix and spectrum above are well represented for graphs ($k$-uniform hypergraphs with $k=2$) as well.

As a consequence of Theorem~\ref{spec}, for $k$-uniform threshold hypergraphs given by $C(a_1,\dots,a_r)_k$, the fewer the blocks and the larger their size, the smaller the number of distinct eigenvalues.

We present three families of $k$-uniform threshold hypergraphs with arbitrary number $n$ of vertices and, for each $n$, considering $1\leq k\leq n-1$ all these hypergraphs have up to $5$ distinct eigenvalues. 

Note that for $r$ to be maximum for a $k$-uniform threshold $G$, the short sequence is of the form $(k,1,1,...,1)$. Again, by Theorem~\ref{spec}, $G$ has an eigenvalue with multiplicity at least $k-1$, which implies that the maximum number of distinct eigenvalues is $n-k+2$.

In all the numerical examples studied, the eigenvalues of the quotient matrix $B$ are simple ones. We know that this always occurs for graphs \cite{trevisan}. Does this hold in general for $k$-uniform threshold hypergraphs?
We leave this question open.

\section*{Acknowledgements:}
Lucas Portugal acknowledges the financial support  by  Coordena\c{c}\~ao de Aperfei\c{c}oamento de Pessoal de N\'ivel Superior - Brasil (CAPES) - Finance Code 001 and Renata R. Del-Vecchio acknowledges the financial support  by  Conselho Nacional de Desenvolvimento Cient\'ifico e Tecnol\'ogico-Brasil (CNPq), grant 308159/2022-5 and 404788/2023-8.

\end{document}